\newcommand{\al}{\alpha}
\newcommand{\ga}{\gamma}
\newcommand{\la}{\lambda}
\newcommand{\eps}{\varepsilon}
\newcommand{\iy}{\infty}
\theoremstyle{plain}
\numberwithin{equation}{section}
\newtheorem{thm}{Theorem}[section]
\newtheorem{lem}[thm]{Lemma}
\theoremstyle{definition}
\newtheorem{example}[thm]{Example}
\newtheorem{ip}[thm]{Inverse Problem}
\theoremstyle{remark}
\newtheorem{remark}[thm]{Remark}
\DeclareMathOperator{\diag}{diag}
\begin{document}

\begin{center}
{\Large\bf Inverse spectral problems for functional-differential\\[0.2cm] operators with involution}
\\[0.2cm]
{\bf Natalia P. Bondarenko} \\[0.2cm]
\end{center}

\vspace{0.5cm}

{\bf Abstract.} The main goal of this paper is to propose an approach to inverse spectral problems for functional-differential operators (FDO) with involution. For definiteness, we focus on the second-order FDO with involution-reflection. Our approach is based on the reduction of the problem to the matrix form and on the solution of the inverse problem for the matrix Sturm-Liouville operator by developing the method of spectral mappings. The obtained matrix Sturm-Liouville operator contains the weight, which causes qualitative difficulties in the study of the inverse problem. As a result, we show that the considered FDO with involution is uniquely specified by five spectra of certain regular boundary value problems.  

\medskip

{\bf Keywords:} inverse spectral problems; functional-differential operators; involution; matrix Sturm-Liouville operator; uniqueness theorem.

\medskip

{\bf AMS Mathematics Subject Classification (2010):} 34K29 34K06 34K08 34K10 34A55   

\vspace{1cm}

\section{Introduction} \label{sec:intr}

The paper is concerned with functional-differential operators (FDO) with involution. For an operator considered on the segment $[-1, 1]$, involution is a non-identical mapping $\nu \colon [-1, 1] \to [-1, 1]$ such that $\nu(\nu(x)) = x$. In this paper, we confine ourselves to the involution-reflection $\nu(x) = -x$. 
In investigation of some aspects, the case of arbitrary smooth involution can be reduced to the case of the reflection.

Ordinary and partial FDO with involution attract attention of scholars because of several reasons. First, non-local equations, which contain the values of the unknown function and its derivatives not only at $x$ but also at some other points, often appear to be more adequate for description of various real-world processes than local differential equations. For example, non-local FDO are widely applied in biological population models and in studying of diffusion processes (see \cite{Wu96}). FDO with reflection have applications in supersymmetric quantum mechanics (see \cite{GPZ99, PVZ11}). Second, differential equations with reflection of argument are used in the investigation of stability of differential-difference equations (see \cite{Shar98}). In addition, FDO with involution cause purely mathematical interest from the both analytic and algebraic points of view (see, e.g., \cite{CT15}).

In recent years, the spectral theory of ordinary FDO with involution has been actively developed. Vladykina and Shkalikov \cite{VSh19-1, VSh19-2} have described the classes of regular boundary conditions and studied spectral properties for the arbitrary-order FDO of form
\begin{equation} \label{hofdo}
y^{(n)}(-x) + \sum_{j = 0}^{n-1} p_j(x) y^{(j)}(-x) + \alpha y^{(m)}(x) + \sum_{j = 0}^{m-1} q_j(x) y^{(j)}(x), \quad \quad x \in (-1, 1).
\end{equation}
There is also a number of studies devoted to the first-order (see \cite{BKLKh07, KKh08, KLS12, BKh14, BKR17, BU18, BKU20, Bur21}) and second-order FDO with involution (see \cite{Sar10, KS12, SS12, KS15, KS17, KSS19, Pol20}). The majority of results of the mentioned papers are concerned with the basis property of eigenfunctions, eigenvalue asymptotics, and other issues of \textit{direct} spectral theory. However, as far as the author knows, there have been no results so far on \textit{inverse} spectral problems which consists in the recovery of FDO with involution from their spectral characteristics. This paper aims to make the first steps in this direction. Our goals are to find spectral characteristics sufficient for the unique reconstruction of an FDO and to propose an approach for further investigation of the corresponding inverse problems.

The greatest success in the theory of inverse spectral problems has been achieved for ordinary differential operators (see the monographs \cite{Mar77, Lev84, PT87, FY01}). Furthermore, inverse problems were studied for certain classes of non-local operators, namely, for integro-differential operators, FDO with frozen argument and with constant delay (see, e.g., the recent papers \cite{BK20, But21, DB21} and the references therein). For those non-local operators, classical methods of inverse problem theory do not work and, therefore, new specific methods were developed. Nevertheless, the situation for FDO with involution-reflection is different. The latter operators can be transformed into differential systems, and the method of spectral mappings \cite{Yur02} can be modified to deal with the corresponding inverse problems. As far as the author knows, this was first noticed by Sergey Buterin.

For definiteness, we focus on the following boundary value problem for the second-order functional-differential equation:
\begin{gather} \label{eqvu}
    -\al u''(x) - u''(-x) + p(x) u(x) + q(x) u(-x) = \la u(x), \quad x \in (-1, 1), \\ \label{bcu}
    u(-1) = u(1) = 0, 
\end{gather}
where $\la$ is the spectral parameter, $\al \in (-1, 1) \cup (\mathbb C \setminus \mathbb R)$, $p, q \in L_1(-1, 1)$. We aim to answer the question: what spectral data uniquely specify the functions $p(x)$ and $q(x)$? For this purpose, we represent the eigenvalue problem~\eqref{eqvu}-\eqref{bcu} in the matrix form:
\begin{gather} \label{eqv}
-Y'' + Q(x) Y = \la W Y, \quad x \in (0, 1), \\ \label{bc}
Y(0) = 0, \quad V(Y) := T Y'(1) - T^{\perp} Y(1) = 0,
\end{gather}
where 
\begin{itemize}
\item $Y = Y(x)$ is a vector function of length $m$ (in our case, $m = 2$);
\item $Q(x)$ is an $(m \times m)$-matrix function, called the \textit{potential}, with complex-valued elements of class $L_1(0, 1)$;
\item $W = \diag\{ w_j \}_{j = 1}^m$ is a diagonal constant matrix, called the \textit{weight};
\item $T$ is an $(m \times m)$ orthogonal projector: $T = T^{\dagger} = T^2$, the symbol ``$\dagger$'' denotes the conjugate transpose.
\item $T^{\perp}$ is the complementary projector: $T^{\perp} = I - T$, $I$ is the unit $(m \times m)$-matrix.
\end{itemize}

Inverse spectral problems for the matrix Sturm-Liouville equation \eqref{eqv} with $W = I$ have been studied fairly completely (see \cite{Mal05, Yur06, CK09, MT09, Bond19, Xu19, Bond21, Bond21-char}). Those results generalize the classical inverse problem theory for the scalar Sturm-Liouville equation $-y'' + q(x) y = \la y$ (see \cite{Mar77, Lev84, PT87, FY01}). However, the case $W \ne I$ causes qualitative difficulties, which are similar, roughly speaking, to the differences in the study of the higher-order differential operators 
\begin{equation} \label{ho}
y^{(n)}(x) + \sum_{k = 0}^{n-2} p_k(x) y^{(k)}(x), \quad n > 2,
\end{equation}
comparing with the second-order ones. Namely, the complex plane of the spectral parameter should be divided into sectors and the asymptotic behavior of the differential equation solutions should be analyzed separately in each sector. The inverse spectral theory of the higher-order differential operators \eqref{ho} has been created by Yurko \cite{Yur02}. Using some ideas of Leibenson \cite{Leib66, Leib71}, Yurko has developed the so-called \textit{method of spectral mappings}. This method allows to prove uniqueness theorems and to obtain constructive procedures for solution together with necessary and sufficient conditions of solvability for various classes of inverse spectral problems. 

In this paper, we develop the ideas of the method of spectral mappings for the matrix Sturm-Liouville problem \eqref{eqv}-\eqref{bc}, and, consequently, obtain the results for the FDO with involution. We prove that the potential $Q(x)$ is uniquely specified by the Weyl matrix, which is a standard spectral characteristic for the matrix Sturm-Liouville operators. This implies the unique specification of the coefficients $p(x)$ and $q(x)$ of equation~\eqref{eqvu} by the spectra of five regular eigenvalue problems for equation \eqref{eqvu} with different boundary conditions. Our results can be applied for developing reconstruction algorithms and for investigating the solvability of the considered inverse problems.

Note that the second-order FDO \eqref{eqvu}-\eqref{bcu} is chosen for simplicity. Our approach can be also applied to the higher-order FDO of form \eqref{hofdo}, but the analysis will be more technically complicated. Other types of the second-order FDO with involution, different from \eqref{eqvu}-\eqref{bcu}, are discussed in Section~\ref{sec:inv}.

It is worth mentioning that the matrix Sturm-Liouville equation \eqref{eqv} can be transformed into the first-order system
$$
\mathscr Q_0 \mathscr Y'(x) + \mathscr Q(x) \mathscr Y(x) = \mu \mathscr Y(x),
$$
where $\mathscr Y(x)$ is a column vector of length $2m$, $\mathscr Q_0$ and $\mathscr Q(x)$ are $(2m \times 2m)$ matrices,
$\mathscr Q_0 = \diag\{ q_k \}_{k = 1}^{2m}$ with distinct non-zero entries $q_k$, $\mu$ is the new spectral parameter. Inverse problems for such systems were also studied by Yurko \cite{Yur05-1, Yur05-2}. However, those results are not applicable for our purposes because of the two reasons. First, the reduction of the second-order system \eqref{eqv}-\eqref{bc} to the first-order one is non-unique. Therefore, the potential $\mathscr Q(x)$ of the first-order system will not be uniquely determined by the spectral data of the second-order system. Second, the boundary conditions from \cite{Yur05-1, Yur05-2} are unsuitable for investigation of FDO with involution. These two issues are discussed in more details in Appendix. Here, we only mention that the specific boundary conditions induced by the continuity of $u(x)$ and $u'(x)$ at $x = 0$ imply the absence of ``the triangular structure'' which appears in \cite{Yur05-1, Yur05-2}, and this is an essential feature of our problem.

The paper is organized as follows. Section~\ref{sec:prelim} contains preliminaries, namely, the reduction of the problem \eqref{eqvu}-\eqref{bcu} to the matrix form \eqref{eqv}-\eqref{bc} and the asymptotic properties of solutions of equation~\eqref{eqv}. In Section~\ref{sec:matr}, we state the inverse problem for the matrix Sturm-Liouville operator \eqref{eqv}-\eqref{bc} and prove the uniqueness of solution for that inverse problem. In Section~\ref{sec:inv}, the results of Section~\ref{sec:matr} are applied to the problem \eqref{eqvu}-\eqref{bcu} with involution. It is shown that $p(x)$ and $q(x)$ are uniquely specified by five spectra of suitable eigenvalue problems. Other types of the second-order FDO with involution are also discussed. In Appendix, we transform the matrix Sturm-Liouville problem~\eqref{eqv}-\eqref{bc} into the first-order system and discuss the difference of our inverse problem from the inverse problem for the first-order system. 

\section{Preliminaries} \label{sec:prelim}

\subsection{Reduction to the matrix form}

Obviously, equation~\eqref{eqvu} can be represented in the matrix form
\begin{equation} \label{eqvZ}
    -Z'' + \mathcal Q(x) Z = \la \mathcal W Z, \quad x \in (0, 1),
\end{equation}
where
\begin{equation} \label{changeZ}
Z(x) = \begin{bmatrix} z_1(x) \\ z_2(x) \end{bmatrix} = \begin{bmatrix} u(-x) \\ u(x) \end{bmatrix}, \quad
\mathcal W = \begin{bmatrix} \al & 1 \\ 1 & \al \end{bmatrix}^{-1},
\quad \mathcal Q(x) = \mathcal W\begin{bmatrix} p(-x) & q(-x) \\ q(x) & p(x) \end{bmatrix}.
\end{equation}
The continuity of $u(x)$ and $u'(x)$ at zero implies
$$
z_1(0) = z_2(0), \quad z_1'(0) + z_2'(0) = 0.
$$
This together with \eqref{bcu} yield the boundary conditions
\begin{equation} \label{bcZ}
\mathcal V(Z) := \mathcal T Z'(0) + \mathcal T^{\perp} Z(0) = 0, \quad Z(1) = 0,
\end{equation}
where $\mathcal T$ and $\mathcal T^{\perp}$ are the following orthogonal projectors:
$$
\mathcal T = \frac{1}{2} \begin{bmatrix} 1 & 1 \\ 1 & 1\end{bmatrix}, \quad \mathcal T^{\perp} = I - \mathcal T = \frac{1}{2} \begin{bmatrix} 1 & -1 \\ -1 & 1\end{bmatrix}.
$$
Diagonalizing the matrix $\mathcal W$ and changing $x \to 1 - x$, we reduce the eigenvalue problem \eqref{eqvZ},\eqref{bcZ} to form~\eqref{eqv}-\eqref{bc}, where
\begin{equation} \label{YZ}
\begin{array}{c}
Y(x) = U Z(1-x), \quad Q(x) = U \mathcal Q(1 - x) U^{\dagger}, \quad W = U \mathcal W U^{\dagger}, \quad T = U \mathcal T U^{\dagger}, \\
U = \frac{1}{\sqrt 2} \begin{bmatrix} 1 & 1 \\ -1 & 1 \end{bmatrix}, \quad W = \begin{bmatrix} \frac{1}{\al + 1} & 0 \\ 0 & \frac{1}{\al - 1}\end{bmatrix}, \quad T = \begin{bmatrix} 1 & 0 \\ 0 & 0 \end{bmatrix}, \quad T^{\perp} = \begin{bmatrix} 0 & 0 \\ 0 & 1 \end{bmatrix}.
\end{array}
\end{equation}
One can easily check that
$$
\al \in (-1,1) \cup (\mathbb C \setminus \mathbb R) \quad \Leftrightarrow \quad w_1 w_2 \ne 0, \quad \arg w_1 \ne \arg w_2,
$$
where $w_j = \frac{1}{\al + (-1)^{j + 1}}$, $W = \diag\{ w_1, w_2 \}$.

\subsection{Asymptotics of solutions} \label{sec:asympt}

In this subsection, we study the asymptotics of special solutions of
the matrix equation~\eqref{eqv} with an arbitrary complex-valued weight $W = \diag \{ w_k \}_{k = 1}^m$ such that $w_k \ne 0$, $w_k \ne w_j$ for $k \ne j$.

Put $\la = \rho^2$.
We start with the construction of the Birkhoff solutions with the certain asymptotic behavior as $|\rho| \to \iy$, by using the standard method (see, e.g., \cite{Nai68}). The complex plane $\mathbb C$ can be divided into sectors $\Gamma_j = \{ \rho \colon \arg \rho \in (\theta_{j-1}, \theta_j)\}$, $j = \overline{1, r}$, $0 \le \theta_0 < \theta_1 < \ldots < \theta_{r-1} < \theta_r = \theta_0 + 2\pi$ so that, in each fixed sector $\Gamma = \Gamma_j$, the numbers $\pm i \sqrt w_k$, $k = \overline{1, m}$ can be renumbered as $R_1$, $R_2$, \ldots, $R_{2m}$ so that
$$
\mbox{Re} \, (\rho R_1) < \mbox{Re} \, (\rho R_2) < \dots < \mbox{Re} \, (\rho R_{2m}), \quad \rho \in \Gamma.
$$

For each fixed sector $\Gamma$, equation~\eqref{eqv} with $\la = \rho^2$ has the fundamental system of vector solutions $\{ E_k(x, \rho) \}_{k = 1}^{2m}$ defined by the following integral equations:
\begin{multline*}
    E_k(x, \rho) = f_k \exp(\rho R_k x) + \sum_{j = 1}^k \frac{1}{2 \rho R_j} \int_0^x J_j \exp(\rho R_j (x - t)) Q(t) E_k(t, \rho) \, dt \\ + \sum_{j = k + 1}^{2m} \frac{1}{2\rho R_j} \int_x^1 J_j \exp(\rho R_j(x - t)) Q(t) E_k(t, \rho) \, dt, \quad k = \overline{1, 2m},
\end{multline*}
where $f_j = [f_{j,s}]_{s = \overline{1,m}}^T$ is the column vector, $f_{j,s} = 1$ if $\pm i\sqrt{w_s} = R_j$ and $f_{j,s} = 0$ otherwise, $J_j = f_j f_j^{\dagger}$. The solutions $\{ E_k(x, \rho) \}_{k = 1}^{2m}$ have the following properties for $\nu = 0, 1$ and some $\rho^* > 0$:

\smallskip

(i) The vector functions $E_k^{(\nu)}(x, \rho)$ are continuous for $x \in [0, 1]$, $\rho \in \overline{\Gamma}$, $|\rho| \ge \rho^*$;

(ii) For each fixed $x \in [0, 1]$, the vector functions $E_k^{(\nu)}(x, \rho)$ are analytic with respect to $\rho \in \Gamma$, $|\rho| \ge \rho^*$;

(iii) $E_k^{(\nu)}(x, \rho) = (\rho R_k)^{\nu} \exp(\rho R_k x) (f_k + O(\rho^{-1}))$ as $|\rho| \to \iy$, $\rho \in \overline{\Gamma}$, uniformly with respect to $x \in [0, 1]$.

\smallskip

For a fixed sector $\Gamma$, denote $d_k = \sqrt w_k$, $k = \overline{1, m}$, choosing the sign of the square root so that $\mbox{Re}(\rho d_k) > 0$ for $\rho \in \Gamma$. Set $D = \diag\{ d_k \}_{k = 1}^m$. By using the columns $\{ E_k(x, \rho) \}_{k = 1}^{2m}$, one can form the $(m \times m)$ matrix solutions $E_{\pm}(x, \rho)$ of equation~\eqref{eqv} with the properties similar to (i)-(ii) and satisfying the asymptotic relation
\begin{equation} \label{asymptE}
E_{\pm}^{(\nu)}(x, \rho) = [I] (\pm i \rho D)^{\nu}\exp(\pm i \rho D x), \quad |\rho| \to \iy, \quad \rho \in \overline{\Gamma}, \quad \nu = 0, 1,
\end{equation}
uniformly with respect to $x \in [0, 1]$.
Here and below $[I] = I + O(\rho^{-1})$.

Denote by $C(x, \la)$ and $S(x, \la)$ the matrix solutions of equation~\eqref{eqv} under the initial conditions
\begin{equation} \label{icCS}
C(0, \la) = S'(0, \la) = I, \quad C'(0, \la) = S(0, \la) = 0.
\end{equation}
Clearly, the matrix functions $C^{(\nu)}(x, \la)$, $S^{(\nu)}(x, \la)$ are entire in $\la$ for each fixed $x \in [0, 1]$ and $\nu = 0, 1$.
Using \eqref{asymptE} and~\eqref{icCS}, one can easily show that
\begin{align} \label{asymptC}
& C^{(\nu)}(x, \la) = \frac{1}{2} (E_+^{(\nu)}(x, \rho)[I] + E_-^{(\nu)}(x, \rho)[I]), \\ \label{asymptS}
& S^{(\nu)}(x, \la) = \frac{1}{2} (i \rho D)^{-1} (E_+^{(\nu)}(x, \rho)[I] - E_-^{(\nu)}(x, \rho)[I]), 
\end{align}
for $|\rho| \to \iy$, $\rho \in \overline{\Gamma}$,
$\nu = 0, 1$, $\la = \rho^2$, uniformly with respect to $x \in [0, 1]$.

\section{Matrix Sturm-Liouville operator} \label{sec:matr}

Consider the boundary value problem $L = L(Q, W, T)$ of form~\eqref{eqv}-\eqref{bc} with $m = 2$, a matrix potential $Q(x)$ with entries of class $L_1(0, 1)$, a weight $W = \diag\{w_1, w_2\}$ such that $\arg w_1 \ne \arg w_2$, $w_1 w_2 \ne 0$, and an arbitrary orthogonal projector $T$.

In this section, we formulate the inverse spectral problem for the matrix Sturm-Liouville operator, and prove the uniqueness theorem for this inverse problem. The presence of the weight $W \ne I$ causes the following difficulty. The asymtotics of solutions of equation \eqref{eqv} contain the two exponents $\exp(i \rho d_k x)$, $k = 1, 2$, with $d_1 \ne d_2$. Therefore, we find such rays $\arg \rho = \theta_s$ that $\mbox{Re}(i \rho d_1) = \mbox{Re}(i \rho d_2)$. The asymptotic behavior of the solutions along these rays allows us to apply the method of spectral mappings to the inverse problem.

The eigenvalues of $L$ coincide with the zeros of the characteristic function $\Delta(\la) = \det(V(S(x, \la)))$, which is entire in $\la$. Taking the asymptotics \eqref{asymptS} into account, one can show that $\Delta(\la)$ has a countable set of the zeros $\{ \la_n \}$ (see Section~\ref{sec:inv} for details).

Denote by $\Phi(x, \la)$ the matrix solution of equation~\eqref{eqv} satisfying the boundary conditions 
\begin{equation} \label{bcPhi}
\Phi(0, \la) = I, \quad V(\Phi) = 0,
\end{equation}
and put $M(\la) = \Phi'(0, \la)$. The matrix functions $\Phi(x, \la)$ and $M(\la)$ are called the \textit{Weyl solution} and the \textit{Weyl matrix} of the problem $L$, respectively. The notion of Weyl matrix generalizes the notion of Weyl function for the scalar Sturm-Liouville operator (see \cite{Mar77, FY01}). In the scalar case, the specification of the Weyl function is equivalent to the specification of the two spectra of Borg's problem \cite{Borg46}. Weyl functions and their generalizations are natural spectral characteristics in the inverse problem theory. In particular, the Weyl matrices have been used for reconstruction of the matrix Sturm-Liouville operators with $W = I$ in \cite{CK09, MT09, Bond19, Xu19, Bond21, Bond21-char}.

One can easily obtain the following relations
\begin{gather} \label{relPhi}
    \Phi(x, \la) = C(x, \la) + S(x, \la) M(\la), \\ \label{relM}
    M(\la) = -(V(S))^{-1} V(C).
\end{gather}
Consequently, the matrix functions $M(\la)$ and $\Phi(x, \la)$ for each fixed $x \in [0, 1]$ are meromorphic in the $\la$-plane, and their poles coincide with the eigenvalues of $L$. 

Proceed with the investigation of the Weyl solution asymptotics.
Consider the partition of the complex plane into sectors described in Subsection~\ref{sec:asympt}. Under the condition $\arg w_1 \ne \arg w_2$, there exist the rays $\arg \rho = \theta_{j_s}$, $s = \overline{1, 4}$, $0 \le \theta_{j_1} < \theta_{j_2} < \pi$, $\theta_{j_3} = \theta_{j_1} + \pi$, $\theta_{j_4} = \theta_{j_2} + \pi$, such that
\begin{equation} \label{ray}
\mbox{Re} (i \rho d_1) = \mbox{Re} (i \rho d_2) > 0, \quad \arg \rho = \theta_{j_s}.
\end{equation}
Note that, although the choice of the square root sign $d_j = \pm \sqrt w_j$ depends on the sector $\Gamma$, in the two neighboring sectors separated by the ray $\arg \rho = \theta_{j_s}$, the choice of the signs is the same.

\begin{example} \label{ex:sectors}
Consider the case $w_1 = 1$, $w_2 = -1$ corresponding to the operator \eqref{eqvu}-\eqref{bcu} with involution and $\al = 0$. Then $\theta_j = \frac{\pi j}{4}$, $j = \overline{0, 8}$, $\{ j_1, j_2, j_3, j_4 \} = \{ 1, 3, 5, 7 \}$. In particular, in the sectors $\Gamma_1 = \{ \rho \colon \arg \rho \in (0, \tfrac{\pi}{4}) \}$ and $\Gamma_2 = \{ \rho \colon \arg \rho \in (\tfrac{\pi}{4}, \tfrac{\pi}{2}) \}$, we have $d_1 = -1$, $d_2 = -i$, and
\begin{align*}
& \mbox{Re}(-i\rho d_2) < \mbox{Re}(-i\rho d_1) < 0 < \mbox{Re}(i \rho d_1) < \mbox{Re}(i \rho d_2), \quad \rho \in \Gamma_1, \\
& \mbox{Re}(-i\rho d_1) < \mbox{Re}(-i\rho d_2) < 0 < \mbox{Re}(i \rho d_2) < \mbox{Re}(i \rho d_1), \quad \rho \in \Gamma_2. 
\end{align*}
In other words, when $\rho$ passes over the ray $\arg \rho = \tfrac{\pi}{4}$, the numbers $d_1$ and $d_2$ remain the same, but the values $\mbox{Re}(i\rho d_1)$ and $\mbox{Re}(i\rho d_2)$ change their order. The situation in the other quarter-planes is symmetric.

\begin{figure}[h!]
\begin{center}
\begin{tikzpicture}
\fill[fill=gray!20] (0, 0)--(2, 0)--(1.6, 1.6)--(0, 2);
\draw (-2, 0) edge (2, 0);
\draw (0, -2) edge (0, 2);
\draw[thick] (-1.6, -1.6) edge (1.6, 1.6);
\draw[thick] (1.6, -1.6) edge (-1.6, 1.6);
\draw (1.2, 0.5) node{$\Gamma_1$};
\draw (0.5, 1.2) node{$\Gamma_2$};
\draw (1.8, 1.8) node{$\theta_1$};
\draw (-1.8, 1.8) node{$\theta_3$};
\draw (-1.8, -1.8) node{$\theta_5$};
\draw (1.8, -1.8) node{$\theta_7$};
\end{tikzpicture}
\end{center}
\caption{Sectors in Example~\ref{ex:sectors}}
\label{fig:1}
\end{figure}

\end{example}

\begin{lem} \label{lem:asympt}
The following asymptotic relations hold
\begin{gather} \label{asymptCS}
    C^{(\nu)}(x, \la) = \frac{1}{2} (i \rho D)^{\nu} \exp(i \rho D x)[I], \quad 
    S^{(\nu)}(x, \la) = \frac{1}{2} (i \rho D)^{\nu - 1} \exp(i \rho D x)[I], \quad x \in (0, 1], \\ \label{asymptPhi}
    \Phi^{(\nu)}(x, \la) = \exp(-i\rho Dx)[I], \quad x \in [0, 1),
\end{gather}
as $|\rho| \to \iy$, $\arg \rho =\theta_{j_s}$, $s = \overline{1, 4}$, $\la = \rho^2$, $\nu = 0, 1$.
\end{lem}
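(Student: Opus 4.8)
The plan is to feed the Birkhoff asymptotics \eqref{asymptE} into the representations \eqref{asymptC}--\eqref{asymptS} for $C$ and $S$ and into the relations \eqref{relPhi}--\eqref{relM} for $\Phi$ and $M$, and to exploit the defining property \eqref{ray} of the rays $\arg\rho=\theta_{j_s}$. The single fact that drives everything is that along such a ray the two diagonal exponentials have equal modulus: since $\mbox{Re}(i\rho d_1)=\mbox{Re}(i\rho d_2)$, for any matrix $A=O(\rho^{-1})$ the conjugate $\exp(\mp i\rho Dx)\,A\,\exp(\pm i\rho Dx)$ is again $O(\rho^{-1})$, uniformly in $x\in[0,1]$, because its $(k,j)$-entry carries the factor $\exp\!\big((\mbox{Re}(i\rho d_j)-\mbox{Re}(i\rho d_k))x\big)$ of modulus $1$. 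Consequently a factor $[I]=I+O(\rho^{-1})$ may be transferred from one side of $\exp(\pm i\rho Dx)$ to the other at the cost of $O(\rho^{-1})$. Off the special rays this commutation fails, which is precisely why the clean forms \eqref{asymptCS}--\eqref{asymptPhi} are asserted only for $\arg\rho=\theta_{j_s}$.

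First I would treat $C$ and $S$. Substituting \eqref{asymptE} into \eqref{asymptC}--\eqref{asymptS} and using the commutation property above, each term $E_{\pm}^{(\nu)}(x,\rho)[I]$ is rewritten as $(\pm i\rho D)^{\nu}\exp(\pm i\rho Dx)[I]$. Along the ray one also has $\mbox{Re}(i\rho d_k)>0$, so for $x$ bounded away from $0$ the summand containing $\exp(-i\rho Dx)$ is exponentially small relative to the one containing $\exp(i\rho Dx)$ and is absorbed into the trailing $[I]$; this yields \eqref{asymptCS}. The exclusion of $x=0$ is genuine, since there the two exponentials coincide and the initial conditions $C(0,\la)=I$, $S(0,\la)=0$ do not match the one-sided formula.

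For the Weyl solution I would proceed through \eqref{relM}. Evaluating the asymptotics \eqref{asymptCS} and their derivatives at $x=1$ gives $V(C)=\tfrac12\big(T(i\rho D)-T^{\perp}\big)\exp(i\rho D)[I]$ and $V(S)=\tfrac12\big(T-T^{\perp}(i\rho D)^{-1}\big)\exp(i\rho D)[I]$. Forming $M=-(V(S))^{-1}V(C)$, the growing factors $\exp(i\rho D)$ produced by $V(C)$ and by the inverse of $V(S)$ cancel (the residual conjugation being controlled by the equal-modulus property), and one extracts the leading Weyl-matrix asymptotics $M(\la)=-i\rho D[I]$, diagonal to leading order as in the scalar case. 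Substituting this into $\Phi=C+SM$ from \eqref{relPhi}, the dominant contributions proportional to $\exp(i\rho Dx)$ coming from $C$ and from $SM$ cancel, and the surviving subdominant part gives $\Phi^{(\nu)}(x,\la)=\exp(-i\rho Dx)[I]$ on $[0,1)$; the point $x=1$ is excluded because the boundary condition $V(\Phi)=0$ is imposed there.

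The main obstacle is precisely this last step. Unlike the cancellation of dominant terms for $C$ and $S$, where the subdominant exponential is simply negligible, in $\Phi=C+SM$ the two dominant exponentials must cancel \emph{exactly} so that the much smaller $\exp(-i\rho Dx)$ survives; hence the leading behaviour of $M$ alone is not enough, and one must control the inversion $(V(S))^{-1}$ and the products with the non-commuting projectors $T,T^{\perp}$ carefully enough that the residual $\exp(i\rho Dx)$-part is not merely $O(\rho^{-1})$ but is genuinely dominated by $\exp(-i\rho Dx)$ for $x<1$. This inversion rests on the regularity of the boundary form $V$, i.e. the invertibility of $V(S)$ for large $|\rho|$ under the standing assumptions on $T$ and $W$. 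Conceptually the result is explained by the fact that $\Phi$ is the unique solution with $\Phi(0,\la)=I$ satisfying $V(\Phi)=0$ at $x=1$, that is, the normalized decaying Birkhoff solution, while the equal-modulus property along $\arg\rho=\theta_{j_s}$ is what keeps every $O(\rho^{-1})$ estimate valid throughout these manipulations.
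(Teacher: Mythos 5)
Your derivation of \eqref{asymptCS} is correct and essentially identical to the paper's: the equal-real-part property \eqref{ray} gives the commutation relation \eqref{change}, after which the subdominant exponential $\exp(-i\rho Dx)$ is absorbed into $[I]$ for each fixed $x>0$. The proof of \eqref{asymptPhi}, however, contains a genuine gap --- precisely the one you flag yourself and then leave unresolved. The route through $M(\la)=-(V(S))^{-1}V(C)=-i\rho D\,[I]$ and $\Phi=C+SM$ cannot deliver the result as described: the asymptotics $M=-i\rho D\,[I]$ carries an additive error of size $O(1)$, and multiplying it by $S\sim\tfrac12(i\rho D)^{-1}\exp(i\rho Dx)[I]$ leaves a residual term of size $O(\rho^{-1})\exp(i\rho Dx)$, which for every fixed $x>0$ on the rays is exponentially \emph{larger} than the claimed answer $\exp(-i\rho Dx)$. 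So after the ``cancellation'' of dominant terms all one can conclude is $\Phi^{(\nu)}(x,\la)=O(\rho^{\nu-1}\exp(i\rho Dx))$, which does not even identify the leading term of $\Phi$. No refinement of the multiplicative $[I]$-errors can repair this: an expansion with relative error $O(\rho^{-1})$ is structurally incapable of resolving a quantity exponentially smaller than the terms being cancelled. What you call ``controlling the inversion carefully'' is therefore not a technical refinement of your argument; it is the entire difficulty, and your proposal does not address it.

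The paper circumvents this by never forming $C+SM$ asymptotically. Instead, $\Phi$ is expanded \emph{exactly} in the fundamental system on the ray, $\Phi=E_+B_++E_-B_-$ (relation \eqref{expandPhi}), and the coefficient matrices are pinned down by the two boundary conditions \eqref{bcPhi}: the condition $V(\Phi)=0$ at $x=1$ forces the coefficient of the growing solution to be exponentially small, $B_+(\rho)=O(\exp(-2\,\mbox{Re}(i\rho D)))$, and $\Phi(0,\la)=I$ then gives $B_-(\rho)=[I]$ (relation \eqref{asymptB}). For $x\in[0,1)$ the term $E_+(x,\rho)B_+(\rho)=O(\exp((x-2)\,\mbox{Re}(i\rho D)))$ is exponentially dominated by $E_-(x,\rho)B_-(\rho)$, which yields \eqref{asymptPhi}; at $x=1$ the two contributions are of the same size, which is exactly why that endpoint is excluded. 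If you wish to keep your $C,S,M$ route, you would have to solve for the coefficients of $E_\pm$ in $\Phi$ exactly rather than asymptotically --- which is precisely the paper's argument.
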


\begin{proof}
Observe that condition \eqref{ray} implies 
\begin{equation} \label{change}
\exp(i \rho D x) [I] = [I] \exp(i \rho D x), \quad \arg \rho = \theta_{j_s}, \: s = \overline{1, 4}.
\end{equation}
Combining \eqref{asymptE}, \eqref{asymptC}-\eqref{asymptS}, and \eqref{change}, we easily obtain \eqref{asymptCS}. 

Fix a ray $\arg \rho = \theta_{j_s}$ and an adjacent sector $\Gamma$. Consider the solutions $E_{\pm}(x, \rho)$, $\rho \in \overline{\Gamma}$. Clearly, the $2m$ columns of the matrix functions $E_{\pm}(x, \rho)$ form a fundamental system of solutions of equation~\eqref{eqv} on the ray $\arg \rho = \theta_{j_s}$, so the Weyl solution can be expanded with respect to this system:
\begin{equation} \label{expandPhi}
\Phi(x, \la) = E_+(x, \rho) B_+(\rho) + E_-(x, \rho) B_-(\rho), \quad \arg \rho = \theta_{j_s}, \quad \la = \rho^2,
\end{equation}
where $B_{\pm}(\rho)$ are $(m \times m)$ matrix functions.
Using \eqref{asymptE}, \eqref{bcPhi}, and \eqref{change}, we show that
\begin{equation} \label{asymptB}
B_+(\rho) = O(\exp(-2 \mbox{Re}(i \rho D))), \quad B_-(\rho) = [I].
\end{equation}
Substituting \eqref{asymptE}, \eqref{asymptB} into \eqref{expandPhi} and using~\eqref{change}, we arrive at \eqref{asymptPhi}.
\end{proof}

Along with \eqref{eqv}, consider the equation
\begin{gather} \label{eqZ2}
    -Z'' + Z Q(x) = \la Z W, \quad x \in (0, 1).
\end{gather}
Let $Y(x)$ and $Z(x)$ be arbitrary solutions of equations \eqref{eqv} and \eqref{eqZ2}, respectively. Define the matrix Wronskian $\langle Z, Y \rangle := Z Y' - Z' Y$. Calculations imply the relation
\begin{equation} \label{wron}
\frac{d}{dx}\langle Z, Y \rangle = 0.
\end{equation}

Denote by $C^*(x, \la)$, $S^*(x, \la)$, and $\Phi^*(x, \la)$ the solutions of equation~\eqref{eqZ2} satisfying the conditions
\begin{gather} \label{icCS*}
C^*(0, \la) = {S^*}'(0, \la) = I, \quad {C^*}'(0, \la) = S^*(0, \la) = 0, \\ \label{bcPhi*}
\Phi^*(0, \la) = I, \quad V^*(\Phi^*) := {\Phi^*}'(1, \la) T - \Phi^*(1, \la) T^{\perp} = 0.
\end{gather}
Denote $M^*(\la) := {\Phi^*}'(0, \la)$. Similarly to \eqref{relPhi}, we get
\begin{equation} \label{relPhi*}
\Phi^*(x, \la) = C^*(x, \la) + M^*(\la) S^*(x, \la).
\end{equation}
Using \eqref{wron}, \eqref{bcPhi}, and \eqref{bcPhi*}, we obtain
\begin{align*}
\langle \Phi^*, \Phi \rangle = \langle \Phi^*, \Phi \rangle_{|x = 0} = & M(\la) - M^*(\la), \\
\langle \Phi^*, \Phi \rangle = \langle \Phi^*, \Phi \rangle_{|x = 1} = & \Phi^*(1, \la) T \Phi'(1, \la) + \Phi^*(1, \la) T^{\perp} \Phi'(1, \la) \\ & - {\Phi^*}'(1, \la) T \Phi(1, \la) - {\Phi^*}'(1, \la) T^{\perp} \Phi(1, \la) = 0.
\end{align*}
Hence $M(\la) \equiv M^*(\la)$.

Let us study the following inverse problem. The matrices $W$ and $T$ are supposed to be known.

\begin{ip} \label{ip:matr}
Given the Weyl matrix $M(\la)$, find $Q(x)$.
\end{ip}

Along with the boundary value problem $L = L(Q, W, T)$ consider another problem $\tilde L = L(\tilde Q, W, T)$ of the same form but with another potential $\tilde Q(x)$. The matrices $W$ and $T$ for these two boundary value problems coincide. If a symbol $\ga$ denotes an object related to the problem $L$, then the symbol $\tilde \ga$ with tilde will be used for the analogous object related to $\tilde L$. The following theorem asserts the uniqueness of solution for Inverse Problem~\ref{ip:matr}.

\begin{thm} \label{thm:uniqm}
If $M(\la) \equiv \tilde M(\la)$, then $Q(x) = \tilde Q(x)$ a.e. on $(0, 1)$.
\end{thm}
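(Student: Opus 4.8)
The plan is to follow the standard architecture of the method of spectral mappings, adapting it to the matrix weighted setting. The core idea is to introduce the \emph{spectral mapping matrix}
$$
P(x, \la) := \Phi(x, \la) \tilde{\Phi}^{*}(x, \la),
$$
built from the Weyl solution of $L$ and the dual Weyl solution $\tilde\Phi^{*}$ of $\tilde L$ (the dual object defined via equation~\eqref{eqZ2}). Using the relations \eqref{relPhi} and \eqref{relPhi*} together with the fact that $\langle \tilde\Phi^{*}, \Phi\rangle$ is constant in $x$ and equals $M(\la) - \tilde M(\la)$ (as computed just above the statement), I expect $P(x, \la)$ to be expressible in terms of the Cauchy-type solutions $C, S, \tilde C^{*}, \tilde S^{*}$ with coefficients that are entire combinations of $M(\la) - \tilde M(\la)$. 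The hypothesis $M \equiv \tilde M$ then forces those combinations to vanish, so that $P(x, \la)$ becomes \emph{entire} in $\la$ for each fixed $x$, rather than merely meromorphic: the poles at the eigenvalues, which come exactly from the poles of the Weyl solutions, cancel.

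Next I would control the growth of $P(x, \la)$ in $\la$. Here is where the asymptotics of Lemma~\ref{lem:asympt} are essential. Along each of the four rays $\arg\rho = \theta_{j_s}$ one has $\Phi^{(\nu)}(x,\la) = \exp(-i\rho Dx)[I]$ and the dual estimate $\tilde\Phi^{*}(x,\la) = \exp(-i\rho Dx)[I]$ (the dual solution satisfies the mirror-image boundary condition \eqref{bcPhi*}, so its asymptotics are the transpose analogue), and the products of exponentials $\exp(-i\rho D x)\exp(i\rho D x)$ are bounded because the exponents cancel. Combining the four rays covers enough of the $\rho$-plane, via a Phragm\'en--Lindel\"of argument applied sectorwise, to conclude that $P(x,\la)$ is bounded as $|\rho|\to\infty$. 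An entire function of $\la$ that is bounded in $\la$ is constant by Liouville's theorem; evaluating the limit along a ray gives $P(x,\la) \equiv P_0(x)$, and the leading asymptotics force $P_0(x) \equiv I$, i.e. $\Phi(x,\la) \equiv \tilde\Phi(x,\la)$.

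From $\Phi \equiv \tilde\Phi$ the equality of potentials follows by a short argument: both $\Phi$ and $\tilde\Phi$ satisfy their respective equations \eqref{eqv}, so subtracting $-\Phi'' + Q\Phi = \la W\Phi$ from $-\tilde\Phi'' + \tilde Q\tilde\Phi = \la W\tilde\Phi$ and using $\Phi \equiv \tilde\Phi$ yields $(Q(x) - \tilde Q(x))\Phi(x,\la) = 0$. Since $\Phi(x,\la)$ is invertible for $|\rho|$ large (its determinant tends to $\det\exp(-i\rho Dx) = 1$ by \eqref{asymptPhi}), I can right-multiply by $\Phi^{-1}$ and obtain $Q(x) = \tilde Q(x)$ a.e.

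The main obstacle I anticipate is establishing the boundedness of $P(x,\la)$ uniformly across the \emph{whole} $\rho$-plane, not just on the four distinguished rays. The asymptotics \eqref{asymptPhi} are proved only along $\arg\rho = \theta_{j_s}$, because only there do the two exponentials $\exp(i\rho d_1 x)$ and $\exp(i\rho d_2 x)$ have comparable moduli; off these rays one exponential dominates the other and the naive product estimate degrades. Handling this requires a careful sector-by-sector analysis: within each open sector $\Gamma$ the Birkhoff solutions $E_{\pm}$ give the growth of $\Phi$ and $\tilde\Phi^{*}$, and one must check that the potential poles (eigenvalues) lie in thin neighborhoods of the rays while $P$ stays bounded on the sector boundaries, so that Phragm\'en--Lindel\"of upgrades boundedness on the four rays to boundedness on the closed sectors. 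Verifying that the Wronskian-based cancellation indeed removes all poles of $P$, and that no spurious growth survives in the interiors of the sectors, is the technically delicate heart of the proof; this is precisely the step where the presence of the weight $W \ne I$ and the resulting division of the plane into sectors complicates the classical scalar argument.
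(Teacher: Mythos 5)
Your overall architecture (an entire matrix of spectral mappings, asymptotics on the four rays $\arg\rho=\theta_{j_s}$, Phragm\'en--Lindel\"of, Liouville, then recovery of $Q$) is the same as the paper's, but your central object is misdefined, and this breaks the proof at its heart. You set $P(x,\la):=\Phi(x,\la)\tilde\Phi^*(x,\la)$ and claim that under $M\equiv\tilde M$ the poles cancel, making $P$ entire. This is false: substituting \eqref{relPhi} and \eqref{relPhi*} gives
\begin{equation*}
\Phi\,\tilde\Phi^* \;=\; C\tilde C^* \;+\; C\,\tilde M^*\tilde S^* \;+\; S M\,\tilde C^* \;+\; S M \tilde M^* \tilde S^*,
\end{equation*}
and the terms containing $M=\tilde M^*$ do not cancel; a product of two Weyl solutions keeps (indeed compounds) the poles. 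Already in the trivial situation $\tilde L=L$, where $M\equiv\tilde M$ holds automatically, your $P$ equals $\Phi\Phi^*$, which has poles at every eigenvalue: in the scalar free case ($Q=0$, $W=1$, $T=0$) it is $\sin^2\rho(1-x)/\sin^2\rho$, nowhere near entire and nowhere near $I$. The cancellation mechanism of the method of spectral mappings lives only in the \emph{cross} combinations mixing a Cauchy-type solution of one problem with a Weyl-type solution of the other. The correct object is the $(2m\times 2m)$ block matrix $P=[P_{jk}]$ defined by \eqref{defP}; inverting the block fundamental matrix of $\tilde L$ (which uses Wronskian identities within $\tilde L$ only) yields $P_{11}=-S\tilde\Phi^*{}'+\Phi\tilde S^*{}'$ and $P_{12}=S\tilde\Phi^*-\Phi\tilde S^*$, and there every occurrence of the Weyl matrices enters through the factor $M-\tilde M^*$, e.g. $P_{12}=S\tilde C^*-C\tilde S^*+S(\tilde M^*-M)\tilde S^*$, which is entire precisely because $M\equiv\tilde M\equiv\tilde M^*$.

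Two further points. First, your supporting claim that $\langle\tilde\Phi^*,\Phi\rangle$ is constant in $x$ and equals $M-\tilde M$ misreads the paper: the computation above the theorem is for $\langle\Phi^*,\Phi\rangle$ within a \emph{single} problem (its only purpose is to show $M^*\equiv M$); for two different potentials one has $\frac{d}{dx}\langle\tilde\Phi^*,\Phi\rangle=\tilde\Phi^*(Q-\tilde Q)\Phi$, which is not zero, so the mixed Wronskian is not constant. Second, the ``technically delicate heart'' you anticipate---tracking eigenvalue poles inside the open sectors so that Phragm\'en--Lindel\"of can be applied---is an artifact of the wrong definition: with the correct $P$, the blocks $P_{11},P_{12}$ are entire of order one by pure algebra, bounded on the four rays by Lemma~\ref{lem:asympt} (this is where \eqref{change} and the weight condition enter), and since the angles between neighbouring rays are less than $\pi$, Phragm\'en--Lindel\"of and Liouville give $P_{11}\equiv I$, $P_{12}\equiv 0$ everywhere; then $S\equiv\tilde S$ from \eqref{defP}, and $Q=\tilde Q$ a.e. follows. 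Note finally that even if your $P$ were entire and one proved $P\equiv I$, the conclusion would be $\tilde\Phi^*=\Phi^{-1}$, from which $\Phi\equiv\tilde\Phi$ does not follow, since $\tilde\Phi\tilde\Phi^*\not\equiv I$ in general; your endgame (subtracting the two equations and inverting) is fine in spirit, but it must be fed by the equality $S\equiv\tilde S$ or $\Phi\equiv\tilde\Phi$ obtained from the correctly defined spectral mappings.
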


\begin{proof}
Define the $(2m \times 2m)$ matrix of spectral mappings $P(x, \la) = [P_{jk}(x, \la)]_{j,k=1,2}$, consisting of the $(m \times m)$ matrix blocks $P_{jk}(x, \la)$, by the following relation:
\begin{equation} \label{defP}
\begin{bmatrix}
P_{11} & P_{12} \\ P_{21} & P_{22}
\end{bmatrix} 
\begin{bmatrix}
\tilde S & \tilde \Phi \\ \tilde S' & \tilde \Phi'
\end{bmatrix} =
\begin{bmatrix}
S & \Phi \\ S' & \Phi'
\end{bmatrix}. 
\end{equation}
Here and below in this proof, the arguments $(x, \la)$ are omitted for brevity. Using \eqref{wron} and the conditions for $\tilde S$, $\tilde \Phi$, $\tilde S^*$, $\tilde \Phi^*$ at $x = 0$, we get
\begin{equation*} 
\begin{bmatrix}
\tilde S & \tilde \Phi \\ \tilde S' & \tilde \Phi'
\end{bmatrix}^{-1} = 
\begin{bmatrix}
-\tilde \Phi^*{}' & \tilde \Phi^* \\ \tilde S^*{}' & -\tilde S^*
\end{bmatrix}.
\end{equation*}
Consequently,
$$
\begin{bmatrix}
P_{11} & P_{12} \\ P_{21} & P_{22}
\end{bmatrix} 
= \begin{bmatrix}
S & \Phi \\ S' & \Phi'
\end{bmatrix}
\begin{bmatrix}
-\tilde \Phi^*{}' & \tilde \Phi^* \\ \tilde S^*{}' & -\tilde S^*
\end{bmatrix}.
$$
In particular,
\begin{equation} \label{P1j}
P_{11} = -S \tilde \Phi^*{}' + \Phi \tilde S^*{}', \quad P_{12} = S \tilde \Phi^* - \Phi \tilde S^*.
\end{equation}
Using the asymptotics of Lemma~\ref{lem:asympt} for $S$, $\Phi$ and the similar asymptotics for $\tilde S^*$, $\tilde \Phi^*$, we get
\begin{equation} \label{asymptP}
P_{11}(x, \rho^2) = [I], \quad P_{12}(x, \rho^2) = O(\rho^{-1}), \qquad |\rho| \to \iy,
\end{equation}
for $\arg \rho = \theta_{j_s}$, $s = \overline{1, 4}$, and each fixed $x \in (0, 1)$. 

On the other hand, the substitution of \eqref{relPhi} and \eqref{relPhi*} into \eqref{P1j} yields
\begin{align*}
& P_{11} = -S \tilde C^*{}' + C \tilde S^*{}' + S(M - \tilde M^*)\tilde S^*{}', \\
& P_{12} = S \tilde C^* - C \tilde S^* + S(\tilde M^* - M) \tilde S^*.
\end{align*}
Since $M(\la) \equiv \tilde M(\la) \equiv \tilde M^*(\la)$, then the matrix functions $P_{11}(x, \rho^2)$ and $P_{12}(x, \rho^2)$ are entire in $\rho$ of order $1$ for each fixed $x \in (0, 1)$. Since \eqref{asymptP} hold on the rays $\arg \rho = \theta_{j_k}$ and the angles between the neighboring rays are less than $\pi$, then Phragmen-Lindel\"of's theorem (see \cite{BFY14}) implies the asymptotics \eqref{asymptP} in the whole $\rho$-plane. Consequently, by virtue of Liouville's theorem, $P_{11}(x, \la) \equiv I$, $P_{12}(x, \la) \equiv 0$. Therefore, \eqref{defP} implies $S(x, \la) \equiv \tilde S(x, \la)$. Hence, $Q(x) = \tilde Q(x)$ a.e. on $(0, 1)$.
\end{proof}

Using the method of spectral mappings, one can obtain a constructive solution of Inverse Problem~\ref{ip:matr}.

\section{Operators with involution} \label{sec:inv}

In the previous section, we have proved the uniqueness of recovering the potential $Q(x)$ of the matrix Sturm-Liouville operator from the Weyl matrix $M(\la)$. The goal of this section is to show that the specification of the Weyl matrix is equivalent to the specification of the five spectra of operators with involution induced by equation~\eqref{eqvu} and different boundary conditions. Consequently, we prove that those five spectra uniquely specify the coefficients $p(x)$ and $q(x)$ of \eqref{eqvu}.

It will be convenient for us to return from the problem~\eqref{eqv}-\eqref{bc} to \eqref{eqvZ},\eqref{bcZ}. Denote by $\mathcal C(x, \la)$ and $\mathcal S(x, \la)$ the matrix solutions of equation~\eqref{eqvZ} satisfying the initial conditions
$$
\mathcal C(1, \la) = \mathcal S'(1, \la) = I, \quad \mathcal C'(1, \la) = \mathcal S(1, \la) = 0.
$$
In view of \eqref{YZ},
$$
\mathcal C(x, \la) = U^{\dagger} C(1-x, \la) U, \quad
\mathcal S(x, \la) = -U^{\dagger} S(1-x, \la) U.
$$
Using \eqref{relM}, we obtain
\begin{equation} \label{UMU}
U^{\dagger} M(\la) U = (\mathcal V(\mathcal S))^{-1} \mathcal V(\mathcal C) =: \mathcal M(\la).
\end{equation}

By using Cramer's rule, we represent $\mathcal M(\la)$ in the form
\begin{equation} \label{MD}
\mathcal M(\la) = \frac{1}{\Delta(\la)} \begin{bmatrix} 
\Delta_{11}(\la) & \Delta_{12}(\la) \\
\Delta_{21}(\la) & \Delta_{22}(\la)
\end{bmatrix},
\end{equation}
where
\begin{gather*}
    \Delta(\la) = \det V(S) = \det \mathcal V(\mathcal S), \\
    \Delta_{11}(\la) := \det [\mathcal V(\mathcal C_1), \mathcal V(\mathcal S_2)], \quad
    \Delta_{12}(\la) := \det [\mathcal V(\mathcal C_2), \mathcal V(\mathcal S_2)], \\
    \Delta_{21}(\la) := \det [\mathcal V(\mathcal S_1), \mathcal V(\mathcal C_1)], \quad
    \Delta_{22}(\la) := \det [\mathcal V(\mathcal S_1), \mathcal V(\mathcal C_2)].    
\end{gather*}
The notations $\mathcal C_1$, $\mathcal C_2$ and $\mathcal S_1$, $\mathcal S_2$ are used for the corresponding columns of the matrix functions $\mathcal C(x, \la)$ and $\mathcal S(x, \la)$, respectively. Obviously, the functions $\Delta(\la)$ and $\Delta_{jk}(\la)$, $j, k = 1, 2$, are entire in $\la$ of order not greater than $\frac{1}{2}$.

Observe that $\Delta(\la)$ is the characteristic function of the eigenvalue problem \eqref{eqvu}-\eqref{bcu} (this problem will be denoted by $\mathcal L$). Similarly, $\Delta_{jk}(\la)$, $j, k = 1, 2$, are the characteristic functions of the corresponding eigenvalue problems $\mathcal L_{jk}$ for equation~\eqref{eqvu} with the following boundary conditions:
\begin{gather*}
    \mathcal L_{11} \colon u'(-1) = u(1) = 0, \qquad
    \mathcal L_{12} \colon u(-1) = u'(-1) = 0, \\
    \mathcal L_{21} \colon u(1) = u'(1) = 0, \qquad
    \mathcal L_{22} \colon u(-1) = u'(1) = 0.
\end{gather*}
That is, the zeros of the characteristic functions coincide with the eigenvalues of the corresponding problems. Note that, although the problems $\mathcal L_{12}$ and $\mathcal L_{21}$ have initial conditions, they are spectral problems, since equation~\eqref{eqvu} is non-local. All the five problems $\mathcal L$, $\mathcal L_{jk}$, $j, k = 1, 2$, are regular in terms of \cite{VSh19-1}.

\begin{lem} \label{lem:asymptD}
The following asymptotic relations hold for $|\rho| \to \iy$, $\arg \rho = \theta_{j_s}$, $s = \overline{1, 4}$:
\begin{gather} \label{asymptD}
    \Delta(\la) = -\frac{1}{2 i \rho d_2} \exp(i \rho (d_1  + d_2))[1], \\ \label{asymptDjk}
    \Delta_{jk}(\la) = \frac{1}{4} \al_{jk} \exp(i \rho (d_1 + d_2))[1], \quad \al_{jk} = \begin{cases} \frac{d_1}{d_2} + 1, \quad j = k, \\ \frac{d_1}{d_2} - 1, \quad j \ne k, \end{cases}
    \quad j, k = 1, 2,
\end{gather}
where $\la = \rho^2$, $[1] = 1 + O(\rho^{-1})$.
\end{lem}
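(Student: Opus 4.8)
The plan is to compute the asymptotics of the five characteristic functions directly from their definitions in terms of determinants of the boundary forms $\mathcal V$, by first reducing everything back to the solutions $C(x,\la)$ and $S(x,\la)$ of the matrix equation \eqref{eqv}, whose asymptotics are supplied by Lemma~\ref{lem:asympt}. Recall that $\Delta(\la) = \det \mathcal V(\mathcal S) = \det V(S)$ and that $\mathcal C, \mathcal S$ are expressed through $C, S$ via the relations $\mathcal C(x,\la) = U^{\dagger} C(1-x,\la) U$ and $\mathcal S(x,\la) = -U^{\dagger} S(1-x,\la) U$. Since determinants are invariant under the unitary conjugation by $U$ (and unaffected by the overall sign once we account for the $2\times 2$ size), each $\Delta$ and $\Delta_{jk}$ can be rewritten as a determinant built from the columns of $C(\cdot,\la)$ and $S(\cdot,\la)$ evaluated at the endpoint, acted on by the boundary form $V(Y) = T Y'(1) - T^{\perp} Y(1)$.

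First I would substitute the asymptotics \eqref{asymptCS} at $x = 1$, namely $C^{(\nu)}(1,\la) = \tfrac12 (i\rho D)^{\nu}\exp(i\rho D)[I]$ and $S^{(\nu)}(1,\la) = \tfrac12 (i\rho D)^{\nu-1}\exp(i\rho D)[I]$, into the boundary form $V$. With $T = \diag\{1,0\}$ and $T^{\perp} = \diag\{0,1\}$ and $D = \diag\{d_1,d_2\}$, the form $V(S)$ has the leading asymptotics of a $2\times 2$ matrix whose first row (the $T$-part) is dominated by $S'(1,\la)$ and whose second row (the $T^{\perp}$-part) is dominated by $-S(1,\la)$. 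Explicitly, the leading diagonal entries of $V(S)$ come out proportional to $\tfrac12 \exp(i\rho d_1)$ (from the $T$-block picking the derivative in the first coordinate, giving $(i\rho d_1)\cdot(i\rho d_1)^{-1} = 1$ up to the $\tfrac12$) and to $-\tfrac12 (i\rho d_2)^{-1}\exp(i\rho d_2)$ (from the $T^{\perp}$-block in the second coordinate). Taking the determinant of this essentially diagonal leading matrix yields $-\tfrac{1}{2}\cdot\tfrac{1}{2}\cdot\tfrac{1}{i\rho d_2}\exp(i\rho(d_1+d_2))$ — one must check the constant carefully, but the exponential factor $\exp(i\rho(d_1+d_2))$ and the single power $(\rho d_2)^{-1}$ are immediate. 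This gives \eqref{asymptD}, with the $[1] = 1 + O(\rho^{-1})$ coming from the $[I]$ corrections and the off-diagonal entries being of strictly lower order by virtue of \eqref{change}, which holds precisely because $\arg\rho = \theta_{j_s}$ forces $\mathrm{Re}(i\rho d_1) = \mathrm{Re}(i\rho d_2)$ so the two exponentials are genuinely comparable.

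For the numerators $\Delta_{jk}$, I would apply the same endpoint asymptotics to the mixed determinants $\det[\mathcal V(\mathcal C_a), \mathcal V(\mathcal S_b)]$. Here one column is built from $C$ (contributing the larger power $(i\rho D)^{\nu}$) and one from $S$ (contributing $(i\rho D)^{\nu-1}$), so the product of leading terms carries the factor $\exp(i\rho d_1)\exp(i\rho d_2) = \exp(i\rho(d_1+d_2))$ with \emph{no} net power of $\rho$, which matches the $\tfrac14$-order constant prefactor in \eqref{asymptDjk}. The case distinction $\al_{jk} = d_1/d_2 \pm 1$ according to $j = k$ or $j \ne k$ should emerge from which pair of coordinates the projectors $T, T^{\perp}$ select in the two columns: the diagonal case combines contributions that add (giving $d_1/d_2 + 1$) while the off-diagonal case combines contributions of opposite sign (giving $d_1/d_2 - 1$). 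I expect the main obstacle to be bookkeeping the precise constants and signs — tracking how the unitary $U$ permutes/mixes the coordinates of $T$ and $W$, keeping the factors of $\tfrac12$, $i$, and $d_k$ straight through the $2\times 2$ determinant expansion, and confirming that on the critical rays the subleading exponentials really are absorbed into the $[1]$ rather than contaminating the leading term. The structural skeleton (the exponential $\exp(i\rho(d_1+d_2))$ and the orders $(\rho d_2)^{-1}$ versus $\rho^0$) is forced by Lemma~\ref{lem:asympt}, but pinning down $\al_{jk}$ exactly requires carefully executing the determinant computation on each of the four boundary forms.
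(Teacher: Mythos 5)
Your treatment of \eqref{asymptD} is exactly the paper's: substitute the endpoint asymptotics \eqref{asymptCS} into the form $V$, obtaining $V(S) = \tfrac12\bigl(T - T^{\perp}(i\rho D)^{-1}\bigr)\exp(i\rho D)[I]$ on the critical rays, and take the determinant. Incidentally, your hesitation about the constant is warranted but not in the way you fear: since $\det(\tfrac12 A) = \tfrac14\det A$ for $2\times 2$ matrices, this formula yields $-\tfrac{1}{4}(i\rho d_2)^{-1}\exp(i\rho(d_1+d_2))[1]$, which is what you computed; the constant stated in the lemma differs by a factor $2$ from the paper's own intermediate formula \eqref{VSC}. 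This slip is immaterial downstream, because only the mutual consistency of the five constants matters in \eqref{formc}--\eqref{formcjk}.

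For the numerators \eqref{asymptDjk}, however, the paper does not expand the four mixed determinants at all. It computes the Weyl matrix asymptotics $M(\lambda) = -(V(S))^{-1}V(C) = -i\rho D[I]$ (the factors $\tfrac12$ and $\Theta$ cancel), conjugates via \eqref{UMU} to get $\mathcal M(\lambda) = U^{\dagger}M(\lambda)U = -i\rho\, U^{\dagger}DU\,[I]$, and then reads off $\Delta_{jk} = \Delta(\lambda)\,\mathcal M_{jk}(\lambda)$ from Cramer's rule \eqref{MD}; since
\[
U^{\dagger}DU = \frac12\begin{bmatrix} d_1+d_2 & d_1-d_2\\ d_1-d_2 & d_1+d_2\end{bmatrix},
\]
the values $\alpha_{jk} = d_1/d_2\pm 1$ drop out at once. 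Your direct route can be completed, but be aware that the mechanism you propose for the $\pm$ distinction is not the right one: in the $Y$-coordinates $T$, $T^{\perp}$, and $D$ are all diagonal, so no ``selection of coordinates by the projectors'' can produce the sums and differences $d_1\pm d_2$; these come entirely from the column mixing by $U$. Concretely, $\mathcal V(\mathcal C_a) = -U^{\dagger}V(C)\,Ue_a$ and $\mathcal V(\mathcal S_b) = U^{\dagger}V(S)\,Ue_b$, with $Ue_1 = \tfrac{1}{\sqrt2}(1,-1)^{T}$ and $Ue_2 = \tfrac{1}{\sqrt2}(1,1)^{T}$, and what decides between $d_1/d_2+1$ and $d_1/d_2-1$ is whether the two columns of the determinant carry \emph{different} mixing vectors (the diagonal cases $\Delta_{11}, \Delta_{22}$, built from $\mathcal C_1,\mathcal S_2$ and $\mathcal S_1,\mathcal C_2$) or the \emph{same} vector (the off-diagonal cases $\Delta_{12}, \Delta_{21}$, built from $\mathcal C_2,\mathcal S_2$ and $\mathcal S_1,\mathcal C_1$); carrying out the expansion indeed gives $\tfrac18(d_1/d_2\pm1)\exp(i\rho(d_1+d_2))[1]$, consistent with the lemma up to the same overall factor of $2$. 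Since this expansion is precisely the step you defer (``pinning down $\alpha_{jk}$ exactly requires carefully executing the determinant computation''), your proposal as written stops just short of the actual content of \eqref{asymptDjk}: either execute the four expansions with the column vectors above, or switch to the Weyl-matrix route, which does this bookkeeping automatically.
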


\begin{proof}
In this proof, we suppose that $\arg \rho = \theta_{j_s}$, $s \in \{ 1, 2, 3, 4 \}$, $\la = \rho^2$.
The asymptotics \eqref{asymptCS} imply
\begin{equation} \label{VSC}
V(S) = \frac{1}{2} \Theta(\rho) \exp(i \rho D)[I], \quad
V(C) = \frac{1}{2} \Theta(\rho) i \rho D \exp(i \rho D)[I], \quad |\rho| \to \iy,
\end{equation}
where 
$$
\Theta(\rho) = T - T^{\perp}(i \rho D)^{-1}, \quad \det \Theta(\rho) = -(i \rho d_2)^{-1}.
$$
Therefore, we immediately obtain the asymptotics \eqref{asymptD} for $\Delta(\la) = \det V(S)$.
Using \eqref{relM}, \eqref{UMU}, and \eqref{VSC}, we get
$$
M(\la) = -i\rho D[I], \quad \mathcal M(\la) = -i\rho U^{\dagger} D U [I], \quad |\rho|\to \iy.
$$
Using the latter asymptotics together with \eqref{change}, \eqref{MD}, and \eqref{asymptD}, we arrive at \eqref{asymptDjk}. Note that $\al_{jk}$ defined in \eqref{asymptDjk} are non-zero, because $w_1 \ne w_2$ and so $d_1 \ne \pm d_2$.
\end{proof}

In view of the asymptotics \eqref{asymptD} and \eqref{asymptDjk}, the characteristic functions $\Delta(\la)$ and $\Delta_{jk}(\la)$, $j, k = 1, 2$, have the countable sets of zeros $\Lambda = \{ \la_n \}$ and $\Lambda_{jk} = \{ \la_{n,jk} \}$, respectively, counted with their multiplicities and being the eigenvalues of the corresponding boundary value problems. By Hadamard's factorization theorem, the characteristic functions can be represented in the form
\begin{equation} \label{prod}
\Delta(\la) = c \prod_n \left( 1 - \frac{\la}{\la_n}\right), \quad \Delta_{jk}(\la) = c_{jk} \prod_n \left( 1 - \frac{\la}{\la_{n,jk}}\right), \quad j,k = 1, 2.
\end{equation}
Here, for simplicity, we assume that all the eigenvalues are non-zero. The case of zero eigenvalues requires minor changes. The constants $c$ and $c_{jk}$ in \eqref{prod} can be found by the following formulas, obtained by using Lemma~\ref{lem:asymptD}:
\begin{align} \label{formc}
    c & = -\frac{1}{2 i d_2} \lim_{\substack{|\rho| \to \iy \\ \arg \rho = \theta_{j_s}}} \rho^{-1} \exp(i \rho (d_1 + d_2)) \prod_n \left( 1 - \frac{\rho^2}{\la_n}\right)^{-1}, \\ \label{formcjk}
    c_{jk} & = \frac{1}{4}\al_{jk} \lim_{\substack{|\rho| \to \iy \\ \arg \rho = \theta_{j_s}}} \exp(i \rho (d_1 + d_2)) \prod_n \left( 1 - \frac{\rho^2}{\la_{n,jk}}\right)^{-1}, \quad j, k = 1, 2.
\end{align}

Thus, it is natural to consider the following five-spectra inverse problem.

\begin{ip} \label{ip:5}
Given the spectra $\Lambda$, $\Lambda_{jk}$, $j,k = 1, 2$, of the eigenvalue problems $\mathcal L$, $\mathcal L_{jk}$, $j,k = 1, 2$, respectively, find $p$ and $q$.
\end{ip}

The parameter $\al \in (-1, 1) \cup (\mathbb C \setminus \mathbb R)$ is supposed to be fixed and known a priori.

In the theorem below, we suppose that, if a symbol $\ga$ denotes an object related to equation \eqref{eqvu}, then the symbol $\tilde \ga$ with tilde denotes the analogous object related to equation \eqref{eqvu} with $p, q$ replaced by $\tilde p, \tilde q$ of the same class.

\begin{thm} \label{thm:uniq5}
Suppose that $\Lambda = \tilde \Lambda$, $\Lambda_{jk} = \tilde \Lambda_{jk}$, $j,k = 1, 2$ (counting with multiplicities). Then $p(x) = \tilde p(x)$ and $q(x) = \tilde q(x)$ a.e. on $(-1, 1)$. Thus, the solution of Inverse Problem~\ref{ip:5} is unique.
\end{thm}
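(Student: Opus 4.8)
The plan is to deduce the hypothesis $M(\la)\equiv\tilde M(\la)$ of Theorem~\ref{thm:uniqm} from the coincidence of the five spectra, and then to translate the resulting equality $Q\equiv\tilde Q$ back into the equality of the coefficients $p,q$ via the reduction formulas \eqref{changeZ}, \eqref{YZ}. Since $\al$ is fixed and known a priori, the matrices $\mathcal W$, $U$, $W$, $T$ are common to both problems, so $L$ and $\tilde L$ are of the form $L(Q,W,T)$ and $L(\tilde Q,W,T)$, and the uniqueness theorem of Section~\ref{sec:matr} is directly applicable.

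First I would reconstruct the characteristic functions $\Delta(\la)$ and $\Delta_{jk}(\la)$, $j,k=1,2$, from the spectra. Each of these functions is entire of order at most $\tfrac12$, hence is recovered from its zero set by the Hadamard product \eqref{prod} up to the leading constant $c$, respectively $c_{jk}$. The crucial point is that these constants are themselves determined by the data: formulas \eqref{formc} and \eqref{formcjk}, obtained from the asymptotics of Lemma~\ref{lem:asymptD}, express $c$ and $c_{jk}$ through the numbers $d_1,d_2$ (which depend only on the known parameter $\al$) and through the very same products $\prod_n(1-\rho^2/\la_n)^{-1}$, $\prod_n(1-\rho^2/\la_{n,jk})^{-1}$ evaluated along the rays $\arg\rho=\theta_{j_s}$. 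Since $\Lambda=\tilde\Lambda$ and $\Lambda_{jk}=\tilde\Lambda_{jk}$, and the rays and the values $d_1,d_2$ are the same for $L$ and $\tilde L$, the right-hand sides of \eqref{formc}, \eqref{formcjk} coincide with their tilde-counterparts. Hence $c=\tilde c$, $c_{jk}=\tilde c_{jk}$, and therefore $\Delta(\la)\equiv\tilde\Delta(\la)$ and $\Delta_{jk}(\la)\equiv\tilde\Delta_{jk}(\la)$ for all $j,k$.

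Next I would assemble the Weyl matrix. By \eqref{MD}, $\mathcal M(\la)=\tfrac{1}{\Delta(\la)}[\Delta_{jk}(\la)]_{j,k=1,2}$ is now seen to equal $\tilde{\mathcal M}(\la)$. Because $U$ is unitary, relation \eqref{UMU} gives $M(\la)=U\mathcal M(\la)U^{\dagger}$, whence $M(\la)\equiv\tilde M(\la)$. Theorem~\ref{thm:uniqm} then yields $Q(x)=\tilde Q(x)$ a.e. on $(0,1)$. Finally, from the relation $Q(x)=U\mathcal Q(1-x)U^{\dagger}$ in \eqref{YZ} and the invertibility of $U$ we obtain $\mathcal Q(x)=\tilde{\mathcal Q}(x)$ a.e. on $(0,1)$; since $\mathcal W$ is invertible, \eqref{changeZ} then gives $p(x)=\tilde p(x)$, $q(x)=\tilde q(x)$ for a.e. $x\in(0,1)$ together with $p(-x)=\tilde p(-x)$, $q(-x)=\tilde q(-x)$, i.e. equality a.e. on all of $(-1,1)$.

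The step I expect to be the main obstacle is the pinning-down of the leading constants $c$ and $c_{jk}$ in the second paragraph. Knowing the zeros of an entire function of order $<1$ determines it only up to a multiplicative constant, and in the classical two-spectra setting an extra normalization is needed; here that normalization is furnished precisely by the known value of $\al$ through $d_1,d_2$ in Lemma~\ref{lem:asymptD}. One should also treat the degenerate situation of zero eigenvalues, which, as noted after \eqref{prod}, requires only routine modifications of the Hadamard products, and verify that the limits in \eqref{formc}, \eqref{formcjk} are genuinely independent of the choice of the ray $\theta_{j_s}$.
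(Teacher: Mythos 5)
Your proposal is correct and follows essentially the same route as the paper's own proof: reconstruct $\Delta(\la)$, $\Delta_{jk}(\la)$ from the spectra via the Hadamard products \eqref{prod} with constants fixed by \eqref{formc}--\eqref{formcjk}, recover $M(\la)$ through \eqref{MD} and \eqref{UMU}, apply Theorem~\ref{thm:uniqm}, and undo the reduction \eqref{YZ}, \eqref{changeZ} to get $p = \tilde p$, $q = \tilde q$ a.e.\ on $(-1,1)$. Your extra attention to why the leading constants are pinned down by the known $\al$ (via $d_1, d_2$) is exactly the point the paper relies on implicitly in its shorter write-up.
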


\begin{proof}
According to the above discussion, the characteristic functions $\Delta(\la)$, $\Delta_{jk}(\la)$, $j,k = 1, 2$, can be uniquely constructed by the spectra $\Lambda$, $\Lambda_{jk}$, $j,k = 1, 2$, by formulas \eqref{prod}, where the constants $c$, $c_{jk}$, $j, k = 1, 2$, are determined by \eqref{formc}-\eqref{formcjk}. Hence, under the conditions of this theorem, $\Delta(\la) \equiv \tilde \Delta(\la)$, $\Delta_{jk}(\la) \equiv \tilde \Delta_{jk}(\la)$, $j, k = 1, 2$. Using \eqref{UMU} and \eqref{MD}, we get $M(\la) \equiv \tilde M(\la)$. Therefore, Theorem~\ref{thm:uniqm} implies $Q(x) = \tilde Q(x)$ a.e. on $(0, 1)$. Taking the change of variables \eqref{changeZ} and \eqref{change} into account, we arrive at the assertion of the theorem.
\end{proof}

\begin{remark}
For clarity of exposition, we consider the simplest Dirichlet boundary conditions $u(-1) = u(1) = 0$. Analogously, our approach can be applied to other types of boundary conditions.
\end{remark}

\begin{remark}
Note that the condition $\al \in (-1, 1) \cup (\mathbb C \setminus \mathbb R)$ is equivalent to $\arg w_1 \ne \arg w_2$, $w_1 w_2 \ne 0$. This condition guarantees the existence of the four rays $\arg \rho = \theta_{j_s}$, $s = \overline{1, 4}$, with the property \eqref{ray}, which is crucial for our proofs. In the case $\al \in (-\iy, -1] \cup [1, \iy)$, the development of other methods is necessary for investigation of inverse problems.
\end{remark}

\begin{remark}
The functional-differential equation with involution
\begin{equation} \label{inv2}
-u''(x) + p(x) u(x) + q(x) u(-x) = \la u(x), \quad x \in (0, 1),
\end{equation}
considered, e.g., in \cite{Pol20}, can be reduced to the matrix form \eqref{eqvZ} with $\mathcal W = I$. The uniqueness of the reconstruction of the matrix Sturm-Liouville operator with $\mathcal W = I$ and the general self-adjoint boundary conditions has been proved in \cite{Bond21}. Although in \cite{Bond21} only the Hermitian potentials $Q(x) = Q^{\dagger}(x)$ are considered, this restriction is not necessary for the uniqueness of recovering the potential $Q(x)$ from the Weyl matrix. The results of \cite{Bond21} imply that the functions $p(x)$ and $q(x)$ in equation~\eqref{inv2} are uniquely specified by the five characteristic functions $\Delta(\la)$, $\Delta_{jk}(\la)$, $j, k = 1,2$, of the corresponding eigenvalue problems $\mathcal L$, $\mathcal L_{jk}$, $j, k = 1, 2$, for equation \eqref{inv2} instead of \eqref{eqvu} with the same boundary conditions as defined above. However, the problems $\mathcal L_{12}$ and $\mathcal L_{21}$ are irregular in this case. In particular, $\Delta_{12}(\la) \equiv \Delta_{21}(\la) \equiv 0$ if $p(x) = q(x) = 0$ in $(-1, 1)$. Consequently, only $\Delta(\la)$, $\Delta_{11}(\la)$, and $\Delta_{22}(\la)$ can be uniquely constructed by their zeros as infinite products. For $\Delta_{12}(\la)$ and $\Delta_{21}(\la)$, the constants $c_{12}$ and $c_{21}$ in \eqref{prod} should be additionally given.
\end{remark}

\begin{remark}
In this paper, we do not rigorously prove the minimality of the given spectral data. However, for the matrix Sturm-Liouville operators with $W = I$, the complete spectral data characterization is obtained in \cite{CK09, MT09, Bond19, Bond21-char}. Those spectral data are equivalent to the Weyl matrix. The known results imply that, for the non-self-adjoint potential $Q(x) \ne Q^{\dagger}(x)$, a certain small perturbation of one element of the Weyl matrix leads to the correct Weyl matrix corresponding to another potential $\tilde Q(x)$. Thus, the Weyl matrix is the minimal data for the unique determination of the matrix Sturm-Liouville operator. The author guesses that, in the case of weight $W \ne I$, the situation is similar, so the described five spectra are the minimal spectral data for the recovery of $p(x)$ and $q(x)$ in \eqref{eqvu}. 
\end{remark}

\section*{Appendix}
\setcounter{section}{5}
\setcounter{equation}{0}
\setcounter{thm}{0}

In Appendix, we transform the matrix Sturm-Liouville equation with a weight $W = \diag\{ w_j \}_{j = 1}^m$, $w_j \ne 0$, $w_j \ne w_k$ for $j \ne k$,
into the first-order system. 
This transform can be useful for investigation of various issues of the spectral theory of the matrix Sturm-Liouville operators and of FDO with involution.
We start with an auxiliary lemma.

\begin{lem} \label{lem:pos}
There exists $\la_* \in \mathbb C$ such that equation \eqref{eqv} with $\la = \la_*$ has an $(m \times m)$ matrix solution $X(x)$ such that $\det X(x) \ne 0$ for all $x \in [0, 1]$.
\end{lem}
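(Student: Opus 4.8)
The plan is to exhibit the required matrix solution $X(x)$ explicitly as one of the Birkhoff matrix solutions $E_+(x,\rho)$ constructed in Subsection~\ref{sec:asympt}, evaluated at a spectral parameter $\la_* = \rho_*^2$ with $|\rho_*|$ large. The underlying idea is that for large $|\rho|$ these solutions are exponentially close to genuine (diagonal) exponentials, and the latter are nowhere singular; so nonsingularity of $X(x)$ on all of $[0,1]$ should follow from a single large choice of $\rho$.

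First I would fix any sector $\Gamma = \Gamma_j$ from the partition of Subsection~\ref{sec:asympt}, together with the matrix $D = \diag\{d_k\}_{k=1}^m$ and the corresponding solution $E_+(x,\rho)$ of equation~\eqref{eqv}. By the asymptotic relation \eqref{asymptE} with $\nu = 0$, one may write
\begin{equation*}
E_+(x,\rho) = (I + G(x,\rho))\exp(i \rho D x), \quad \rho \in \overline{\Gamma}, \ |\rho| \ge \rho^*,
\end{equation*}
where $G(x,\rho) = O(\rho^{-1})$ \emph{uniformly} with respect to $x \in [0,1]$, i.e.\ $\|G(x,\rho)\| \le C|\rho|^{-1}$ for a constant $C$ independent of $x$.

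Next I would compute the determinant. Since $\exp(i\rho D x) = \diag\{\exp(i\rho d_k x)\}_{k=1}^m$, its determinant equals $\prod_{k=1}^m \exp(i\rho d_k x)$, which is nonzero for every $x$ and every $\rho$. Hence
\begin{equation*}
\det E_+(x,\rho) = \det(I + G(x,\rho)) \prod_{k=1}^m \exp(i\rho d_k x).
\end{equation*}
Choosing $\rho_* \in \Gamma$ with $|\rho_*| \ge \max\{\rho^*, 2C\}$ forces $\|G(x,\rho_*)\| \le \tfrac12$ for all $x \in [0,1]$, so that $I + G(x,\rho_*)$ is invertible (via the Neumann series) and $\det(I + G(x,\rho_*)) \ne 0$ for every $x \in [0,1]$. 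Consequently $\det E_+(x,\rho_*) \ne 0$ on the whole segment, and setting $\la_* = \rho_*^2$, $X(x) = E_+(x,\rho_*)$ yields the claim.

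The argument is essentially immediate once the Birkhoff asymptotics are in hand; there is no serious obstacle. The one point that must be stated with care—and the only place the estimate could break down—is that the remainder $O(\rho^{-1})$ in \eqref{asymptE} is uniform in $x$. This uniformity, which is precisely property~(iii) of the solutions $E_k(x,\rho)$, is what permits a single threshold $|\rho_*|$ to render $I + G(x,\rho_*)$ invertible simultaneously for all $x \in [0,1]$; a merely pointwise remainder bound would only exclude singularity at each fixed $x$ and would not suffice.
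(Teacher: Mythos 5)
Your proof is correct, but it takes a genuinely different route from the paper's. The paper likewise fixes a sector $\Gamma$ and takes $|\rho|$ large along a ray $\arg\rho=\varphi$ inside it, but it chooses $X(x)=C(x,\la_*)$, the canonical solution with initial conditions \eqref{icCS}. Since $C$ mixes $E_+$ and $E_-$ (see \eqref{asymptC}), its leading behaviour is cosine-like, and the paper must therefore work harder: it takes an arbitrary unit vector $a$, writes the components of the leading term as $\cos(\rho d_j x)a_j$ plus a remainder, and invokes the nontrivial lower bound \eqref{estc}, $|\cos(\rho d_j x)|\ge c_1\exp(|\mbox{Im}(\rho d_j x)|)$ on a ray strictly inside the sector, to conclude $\|C(x,\rho^2)a\|\ge c_2>0$ uniformly in $x$ and hence $\det C(x,\rho^2)\ne 0$. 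Your choice $X(x)=E_+(x,\rho_*)$ sidesteps that estimate entirely: by \eqref{asymptE} the matrix $E_+$ factors as a near-identity matrix times an invertible diagonal exponential, so nonsingularity follows at once from the Neumann series once $|\rho_*|$ is large, and the uniformity in $x$ of the $O(\rho^{-1})$ remainder--which you correctly identify as the one point that matters--is exactly what property (iii) of Subsection~\ref{sec:asympt} supplies; note also that the paper explicitly states that $E_{\pm}$ are matrix solutions of \eqref{eqv}, so using them is legitimate. What your argument buys is brevity and transparency; what the paper's buys is that the nonsingular solution is the canonical, entire-in-$\la$ solution $C(x,\la)$ rather than a sector-dependent Birkhoff solution, though nothing in the Appendix requires this extra feature, since $X$ is used there only to form $U=\hat D X'X^{-1}$ and its non-uniqueness is acknowledged outright. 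Both proofs are valid.
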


\begin{proof}
Fix a sector $\Gamma = \Gamma_j$ and a ray $\{ \rho \colon \arg \rho = \varphi \} \subset \Gamma$. Let us prove that there exists $\rho_* > 0$ such that $\det C(x, \rho^2) \ne 0$ for all $\rho$ satisfying $|\rho| \ge \rho_*$, $\arg \rho = \varphi$ and for all $x \in [0, 1]$. The asymptotics \eqref{asymptE} and \eqref{asymptC} imply
$$
C(x, \rho^2) = \frac{1}{2}[I] (\exp(i \rho D x)[I] + \exp(-i\rho dx) I + O(\rho^{-1})), \quad \arg \rho = \varphi, \quad |\rho| \to \iy.
$$
Here and below in this proof, the convergence is uniform with 
with respect to $x \in [0, 1]$.
Let $a \in \mathbb C^m$ be an arbitrary unit vector: $\| a \| = \sqrt{\sum\limits_{j = 1}^m |a_j|^2} = 1$, and let
$$
b(x, \rho) = \frac{1}{2} (\exp(i \rho D x)[I] + \exp(-i\rho Dx)) a.
$$
We aim to prove that $\| b \| \ge c_0 > 0$ for sufficiently large $|\rho|$, $\arg \rho = \varphi$, and $x \in [0, 1]$. Clearly, $b = [b_j(x, \rho)]_{j = \overline{1, m}}^T$, where
\begin{equation} \label{defb}
b_j(x, \rho) = \cos (\rho d_j x) a_j + O(\rho^{-1} \exp(|\mbox{Im}\,(\rho d_j x)|), \quad \arg \rho = \varphi, \quad j = \overline{1, m}.
\end{equation}
Recall that 
$$
\mbox{Re}(i \rho d_j) = |\mbox{Im}(\rho d_j)| > 0, \quad \rho \in \Gamma, \quad j = \overline{1, m}.
$$
Therefore, the standard estimate yields
\begin{equation} \label{estc}
|\cos (\rho d_j x)| \ge c_1 \exp(|\mbox{Im}\,(\rho d_j x)|), \quad \arg \rho = \varphi, \quad j = \overline{1, m},
\end{equation}
where the constant $c_1$ does not depend on $|\rho|$ and $x$. Combining \eqref{defb} and \eqref{estc}, we conclude that, for every $\eps > 0$, there exists $\rho_{\eps} > 0$ such that 
$$
|b_j(x, \rho)| \ge c_1 |a_j| - \eps, \quad |\rho| \ge \rho_{\eps}, \quad \arg \rho = \varphi.
$$
Since $\| a  \| = 1$, then $\| b \| \ge c_1 > 0$ for sufficiently large $|\rho|$. Consequently, 
$$
\| C(x, \rho^2) a \| = \| [I] (b + O(\rho^{-1}) \| \ge c_2 > 0, \quad |\rho| \ge \rho_*, \quad \arg \rho = \varphi, \quad x \in [0, 1],
$$
for some $\rho_* > 0$. Hence, $\la_* = (\rho_* \exp(i \varphi))^2$, $X(x) = C(x, \la_*)$ satisfy the assertion of the lemma.
\end{proof}

It will be convenient for us to consider the equation of form
\begin{equation} \label{eqv2}
- \hat W Y''(x) + \hat Q(x) Y(x) = \la Y(x), \quad x \in (0, 1),
\end{equation}
where $\hat W = \diag \{ \hat w_k \}_{k = 1}^m$.
Clearly, equation~\eqref{eqv} can be easily reduced to \eqref{eqv2} by the transform $\hat W = W^{-1}$, $\hat Q(x) = W^{-1} Q(x)$.

Put $\hat D := \diag \{ \hat d_j \}_{j = 1}^m$, $d_j := \sqrt w_j$, $\mu = \sqrt{\la - \la_*}$ (the square root branch can be chosen arbitrarily), $U(x) := \hat D X'(x) X^{-1}(x)$, where $\la_*$ and $X(x)$ satisfy the assertion of Lemma~\ref{lem:pos}. Since the entries of $Q(x)$ belong to $L_1(0, 1)$, then $X'(x)$ is absolutely continuous on $[0, 1]$, and so does $U(x)$. Denote 
\begin{equation} \label{defY12}
Y_1(x) = Y(x), \quad Y_2(x) = -\mu^{-1}(\hat D Y_1'(x) - U(x) Y_1(x)). 
\end{equation}

\begin{lem}
For $\la \ne \la_*$ ($\mu \ne 0$), equation \eqref{eqv2} is equivalent to the system
\begin{align} \label{Y1}
    -\hat D Y_1' + U Y_1 & = \mu Y_2, \\ \label{Y2}
    \hat D Y_2' + \hat D U \hat D^{-1} Y_2 & = \mu Y_1.
\end{align}
\end{lem}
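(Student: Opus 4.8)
The plan is to establish the equivalence in both directions by direct differentiation; the single nontrivial ingredient is a Riccati-type identity for the auxiliary matrix $U(x) = \hat D X'(x) X^{-1}(x)$. First I note that equation \eqref{Y1} carries no information: it is exactly the defining relation \eqref{defY12} for $Y_2$ rewritten, so it holds by construction for any $Y_1$. Hence the whole content of the lemma lies in \eqref{Y2} and in the converse implication.

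The key preliminary is to compute $U'$. By Lemma~\ref{lem:pos} we have $\det X(x) \ne 0$ on $[0,1]$, so $U$ is well defined and absolutely continuous, and $(X^{-1})' = -X^{-1} X' X^{-1}$. Since $X$ solves \eqref{eqv2} at $\la = \la_*$, that is $\hat D^2 X'' = (\hat Q - \la_* I) X$ (recall $\hat D^2 = \hat W$), I obtain
\[
U' = \hat D X'' X^{-1} - \hat D X' X^{-1} \, X' X^{-1} = \hat D^{-1}(\hat Q - \la_* I) - U \hat D^{-1} U .
\]
This identity is the source of all the cancellations below, so I regard it as the heart of the argument.

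For the forward implication, assume $Y$ solves \eqref{eqv2} and put $Y_1 = Y$ and $Y_2$ as in \eqref{defY12}. Differentiating $\mu Y_2 = -\hat D Y_1' + U Y_1$ and inserting both $\hat D Y_1'' = \hat D^{-1}(\hat Q - \la I) Y_1$ (from \eqref{eqv2}) and the Riccati identity, the two occurrences of $\hat Q$ cancel and leave the scalar factor $\la - \la_* = \mu^2$, giving
\[
\mu Y_2' = \mu^2 \hat D^{-1} Y_1 - U \hat D^{-1} U Y_1 + U Y_1'.
\]
I then eliminate $Y_1'$ via \eqref{Y1} in the form $Y_1' = \hat D^{-1} U Y_1 - \mu \hat D^{-1} Y_2$; the two copies of $U \hat D^{-1} U Y_1$ cancel, and after dividing by $\mu \ne 0$ and multiplying on the left by $\hat D$ I reach precisely \eqref{Y2}.

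The converse runs the same computation in reverse: starting from \eqref{Y1}--\eqref{Y2}, I differentiate \eqref{Y1}, substitute \eqref{Y2} together with the Riccati identity, watch the same terms cancel, and recover $-\hat W Y_1'' + \hat Q Y_1 = \la Y_1$, i.e.\ \eqref{eqv2} with $Y = Y_1$. I expect no genuine obstacle beyond bookkeeping: the only delicate points are keeping the non-commuting factors $\hat D$, $U$, $\hat Q$ in the correct order so that the conjugated term $\hat D U \hat D^{-1}$ appears as written in \eqref{Y2}, and checking that each cancellation uses exactly the Riccati identity and the standing hypothesis $\mu \ne 0$. All analytic content is already supplied by the invertibility of $X(x)$ from Lemma~\ref{lem:pos}.
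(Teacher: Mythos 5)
Your proof is correct and takes essentially the same route as the paper's: both treat \eqref{Y1} as the definitional relation \eqref{defY12} for $Y_2$ and reduce the equivalence of \eqref{Y2} with \eqref{eqv2} to the same Riccati identity $U' = \hat D^{-1}(\hat Q - \la_* I) - U \hat D^{-1} U$, which is exactly the paper's \eqref{sm2}. The only cosmetic difference is that you carry out the two implications as separate computations, while the paper runs a single reversible identity chain obtained by substituting \eqref{defY12} into the left-hand side of \eqref{Y2}.
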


\begin{proof}
Relation \eqref{Y1} holds due to \eqref{defY12}. In order to prove the equivalence of \eqref{eqv2} and \eqref{Y1}-\eqref{Y2}, we substitute the definition \eqref{defY12} of $Y_2$ into the left-hand side of \eqref{Y2}:
\begin{equation} \label{sm1}
\hat D Y_2' + \hat D U \hat D^{-1} Y_2 =
\frac{1}{\mu} \hat D (-\hat D Y_1'' + U' Y_1  + U \hat D^{-1} U Y_1).
\end{equation}
Calculations show that
\begin{equation} \label{sm2}
U' = \hat D X'' X^{-1} - \hat D X' X^{-1} X' X^{-1} = \hat D^{-1} (\hat Q - \la_* I) - U \hat D^{-1} U.
\end{equation}
Substituting \eqref{sm2} into \eqref{sm1} and using \eqref{eqv2}, we arrive at \eqref{Y2}:
$$
\hat D Y_2' + \hat D U \hat D^{-1} Y_2 = \frac{1}{\mu} (- \hat W Y_1'' + \hat Q Y_1 - \la_* Y_1) = \mu Y_1.
$$
\end{proof}

The system \eqref{Y1}-\eqref{Y2} can be represented in the following block-matrix form:
$$
\begin{bmatrix} 0 & \hat D \\ -\hat D & 0 \end{bmatrix} \begin{bmatrix} Y_1' \\ Y_2' \end{bmatrix} + \begin{bmatrix} 0 & \hat D U \hat D^{-1} \\ U & 0 \end{bmatrix} \begin{bmatrix} Y_1 \\ Y_2 \end{bmatrix} = \mu \begin{bmatrix} Y_1 \\ Y_2 \end{bmatrix}.
$$
Diagonalizing the matrix $\begin{bmatrix} 0 & \hat D \\ -\hat D & 0 \end{bmatrix}$ by a suitable unitary transform
$$
\mathcal U^{\dagger} \begin{bmatrix} 0 & \hat D \\ -\hat D & 0 \end{bmatrix} \mathcal U = \begin{bmatrix} \hat D & 0 \\ 0 & -\hat D \end{bmatrix} =: \mathscr Q_0, \quad \mathcal U^{\dagger} = \mathcal U^{-1} \in \mathbb C^{2m \times 2m},
$$
we reduce the system \eqref{Y1}-\eqref{Y2} to the form
\begin{equation} \label{sys}
\mathscr Q_0 \mathscr Y'(x) + \mathscr Q(x) \mathscr Y(x) = \mu \mathscr Y(x),
\end{equation}
where
$$
\mathscr Y(x) := \mathcal U^{\dagger} \begin{bmatrix} Y_1(x) \\ Y_2(x) \end{bmatrix}, \quad \mathscr Q(x) = \mathcal U^{\dagger} \begin{bmatrix} 0 & \hat D U(x) \hat D^{-1} \\ U(x) & 0 \end{bmatrix} \mathcal U.
$$

Note that the described reduction of the matrix Sturm-Liouville equation \eqref{eqv} to form \eqref{sys} is non-unique, since the choice of the solution $X(x)$ satisfying Lemma~\ref{lem:pos} is non-unique. Hence, the potential $\mathscr Q(x)$ of the system \eqref{sys} can not be uniquely determined by any spectral data corresponding to \eqref{eqv}. Therefore, it is inconvenient to use the reduction to the first-order system to deal with inverse spectral problems for FDO with involution. Anyway, this reduction may be useful for studying direct spectral problems.

It is also worth mentioning that the inverse problem statement for the first-order system \eqref{sys} from \cite{Yur05-1, Yur05-2} has the principal difference comparing with the inverse problems studied in this paper. In \cite{Yur05-1, Yur05-2}, the linear forms $U_{\xi}(Y) = h_{\xi} Y(0)$ and $V_{\xi}(Y) = H_{\xi} Y(1)$ are introduced for $\xi = \overline{1, 2m}$, where $h_{\xi}$ and $H_{\xi}$ are row vectors of length $2m$, and the Weyl solutions $\Phi_k(x, \mu)$, $k = \overline{1, 2m}$, of \eqref{sys} are defined by the boundary conditions
$$
U_{\xi}(\Phi_k) = 0, \quad \xi = \overline{1, k-1}, \qquad U_k(\Phi_k) = 1, \qquad
V_{\eta}(\Phi_k) = 0, \quad \eta = \overline{1, 2m-k}. 
$$
This special structure of the boundary conditions allows the author of \cite{Yur05-1, Yur05-2} to achieve the triangular structure of the Weyl matrix. However, studying FDO with involution-reflection, we have the fixed boundary conditions induced by the continuity of $u(x)$ and its derivatives at $x = 0$, and these conditions cannot be changed. Therefore, Yurko's problem statement with the triangular structure appears to be unnatural for investigation of FDO with involution. The main technical feature of our problem is that suitable asymptotics are only valid on certain rays $\arg \rho = \theta_{j_s}$, $s = \overline{1, 4}$. This feature implies qualitative differences of our problem from the ones for the higher-order differential operators \cite{Yur02} and for the first-order systems \cite{Yur05-1, Yur05-2}.

\medskip

\textbf{Acknowledgement.} The author is grateful to Professor Sergey Buterin, who noticed that inverse problems for FDO with involution can be reduced to inverse problems for differential systems and investigated by developing the method of spectral mappings, and who encouraged the author to study such problems.

\medskip

\noindent Natalia Pavlovna Bondarenko \\
1. Department of Applied Mathematics and Physics, Samara National Research University, \\
Moskovskoye Shosse 34, Samara 443086, Russia, \\
2. Department of Mechanics and Mathematics, Saratov State University, \\
Astrakhanskaya 83, Saratov 410012, Russia, \\
e-mail: {\it BondarenkoNP@info.sgu.ru}

\end{document}